\setlist[enumerate]{itemsep=.2em,topsep=.2em,leftmargin=1.25em,itemindent=2.0em}
\newtheorem{thm}{Theorem}%[section]
\newtheorem{lem}[thm]{Lemma}
\newtheorem{cor}[thm]{Corollary}
\newtheorem{conj}[thm]{Conjecture}
\theoremstyle{definition}
\newtheorem{defn}[thm]{Definition}
\newtheorem{say}[thm]{}
\newtheorem{exmp}[thm]{Example}
\newtheorem*{ack}{Acknowledgments}      % \renewcommand{\theack}{} 
\newtheorem{defn-thm}[thm]{Definition--Theorem}  %!!!!!!!!!!!!!!!!!!!!!!!!
\newtheorem{defn-lem}[thm]{Definition--Lemma}  %!!!!!!!!!!!!!!!!!!!!!!!!
\theoremstyle{remark}
\renewcommand{\c}[0]{{\mathbb C}}  
\renewcommand{\o}[0]{{\mathcal O}} 
\newcommand{\z}[0]{{\mathbb Z}}
\renewcommand{\a}[0]{{\mathbb A}} 
\newcommand{\dd}[0]{{\mathbb D}}
\newcommand{\p}[0]{{\mathbb P}}
\newcommand{\q}[0]{{\mathbb Q}}
\newcommand{\map}[0]{\dasharrow}
\newcommand{\qtq}[1]{\quad\mbox{#1}\quad}
\newcommand{\pic}[0]{\operatorname{Pic}}
\newcommand{\proj}[0]{\operatorname{Proj}}
\newcommand{\rdown}[1]{\lfloor{#1}\rfloor}
\newcommand{\onto}[0]{\twoheadrightarrow}
\newcommand{\tsum}[0]{\textstyle{\sum}}
\newcommand{\defor}[0]{\operatorname{Def}}
\def\into{\DOTSB\lhook\joinrel\to}
\def\loccoh#1.#2.#3.#4.{H^{#1}_{#2}(#3,#4)}
\DeclareMathAlphabet{\mathchanc}{OT1}{pzc}%
                                {m}{it}
\begin{document}
\bibliographystyle{amsalpha}
%\hfill\today

  \title{Deformations of  varieties of general type}
  \author{J\'anos Koll\'ar}

\begin{abstract} 
We prove that  small deformations of a projective variety of general type   are also  projective varieties of general type,  with the same plurigenera.
\end{abstract}

 \maketitle

Our aim is to prove the following.

\begin{thm} \label{mmp.extends.conj.thm.c.cor}
Let $g:X\to S$ be a flat, proper morphism of complex analytic spaces. 
Fix a point $0\in S$ and assume that the fiber
$X_0$ is  projective, of general type, and with  canonical  singularities.
Then   there is an open  neighborhood  $0\in U\subset S$ such that
\begin{enumerate}
\item the plurigenera of $X_s$  are independent of $s\in U$ for every $r$, and
\item  the fibers $X_s$ are  projective  for every $s\in U$.
\end{enumerate}
\end{thm}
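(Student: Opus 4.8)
The plan is to prove the two assertions together, since both rest on understanding how the relative pluricanonical sheaves $\omega_{X/S}^{[r]}$ and their pushforwards $g_*\omega_{X/S}^{[r]}$ behave near $0$. First I would shrink $S$ so that the geometry is as tame as possible. Canonical singularities are rational, hence Du~Bois and Cohen--Macaulay, so after shrinking the total space $X$ is Cohen--Macaulay, and by openness of the Serre conditions in flat families the fibers $X_s$ are normal and Cohen--Macaulay. This makes each $\omega_{X_s}^{[r]}$ a well-defined reflexive sheaf, $P_r(X_s)=h^0(X_s,\omega_{X_s}^{[r]})$ a sensible invariant, and it allows a comparison between $\omega_{X_s}^{[r]}$ and the restriction of the relative sheaf $\omega_{X/S}^{[r]}$.

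For part (1), one inequality is formal: upper semicontinuity of cohomology gives $P_r(X_s)\le P_r(X_0)$ for $s$ near $0$, once $\omega_{X/S}^{[r]}$ is shown to commute with base change to the special fiber (here the rational/Du~Bois nature of the singularities of $X_0$ is what supplies the base-change isomorphism over $0$). The substantive direction is the reverse inequality, namely extending every pluricanonical section from $X_0$ to nearby fibers, and this is the heart of the matter. I would attack it through an extension theorem for pluricanonical forms: in the smooth case this is Siu's theorem via Ohsawa--Takegoshi $L^2$-extension, and in the present canonical setting one runs the same machine on a log resolution, where the canonical condition controls the discrepancies, or equivalently invokes the MMP-based extension results of Kawamata--Nakayama type adapted to families. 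Because $X_0$ is of general type, $K_{X_0}$ is big, which provides the positivity needed to feed the extension theorem and to conclude that the extended sections fill out $g_*\omega_{X/S}^{[r]}$; this simultaneously forces the $X_s$ to be of general type. I expect the genuine obstacle to be precisely making this extension argument work in the analytic, singular category, with no a priori control on the singularities of $X_s$ for $s\neq 0$.

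Granting invariance of plurigenera, $g_*\omega_{X/S}^{[r]}$ is locally free of constant rank with formation commuting with base change. I would then assemble the relative canonical algebra $\mathcal{R}\leteq\bigoplus_{r\ge 0}g_*\omega_{X/S}^{[r]}$. Finite generation of the canonical ring of $X_0$, via BCHM, together with the constancy of the $P_r$, propagates to finite generation of $\mathcal{R}$ over $\o_S$ after further shrinking $S$, and flatness of each graded piece makes $\mathcal{R}$ a flat sheaf of $\o_S$-algebras. Its relative $\proj$ over $S$, call it $X^c\to S$, is then projective over $S$, carries a relatively ample tautological sheaf, and has fibers $X^c_s$ equal to the canonical models of the $X_s$, compatibly with base change.

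It remains to deduce projectivity of $X_s$ itself, passing from the canonical model back to $X_s$. Since $K_{X_s}$ is big, each $X_s$ is Moishezon and is birational to the projective variety $X^c_s$; the base-change-compatible system $g_*\omega_{X/S}^{[r]}$ realizes $X_s\map X^c_s$ as the fiberwise canonical map. Projectivity of $X_s$ then follows by transporting a polarization: a relatively ample class on $X^c\to S$ pulls back, through a resolution of the relative canonical map $X\map X^c$, to a relatively big-and-semiample class whose fiberwise restriction, combined with the projectivity of the reference fiber $X_0$, yields an ample class on $X_s$. Thus the crux of the whole argument is the extension step of the second paragraph; once invariance of plurigenera is established, both the construction of $X^c\to S$ and the passage to projectivity of the individual fibers are comparatively formal consequences of finite generation and base change.
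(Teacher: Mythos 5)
There are two genuine gaps, both at the points you yourself identify as the crux. First, your plan for the hard direction of (1) is to extend pluricanonical sections from $X_0$ to nearby fibers by a Siu/Ohsawa--Takegoshi $L^2$-extension argument (or a Kawamata--Nakayama-type variant on a log resolution). All of those extension theorems require $g$ to be projective, or at least K\"ahler: the $L^2$ machine needs a positively curved metric on a line bundle over the total space, and bigness of $K_{X_0}$ supplies positivity only on the central fiber, not on $X$. The whole point of the theorem is that $g$ is merely a proper analytic morphism --- the projective case is exactly the one already settled by Siu and Nakayama, as the paper notes. The paper's actual engine is entirely different: it proves that every MMP step for $X_0$ extends to the family over a neighborhood of $0$ (Theorem~\ref{mmp.extends.conj.thm.c}, via Markoe--Rossi for extending contractions, Nakayama's surjectivity theorem (\ref{nak.3.8}), and a finite-generation lemma), passes to a relative minimal model $X^{\rm m}\to S$, and there reads off invariance of $h^0(\omega^{[r]})$ for $r\ge 2$ from vanishing of higher cohomology since $K_{X_0^{\rm m}}$ is nef and big, with $r=1$ handled by Du~Bois-type invariance of the $h^i(\o_{X_s})$. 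Without a substitute for this MMP-extension step, your second paragraph has no working mechanism.

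Second, the concluding step ``transport a polarization'' does not work. The pullback to $X_s$ of an ample class on the canonical model $X^c_s$ is only big and semiample; it has degree zero on every curve contracted by $X_s\map X^c_s$, so it cannot be ample unless $X_s$ already coincides with its canonical model. More fundamentally, being Moishezon is not enough: Example~\ref{bad.fams.exmp} of the paper gives a family with $K_{X_0}$ ample in which every fiber is bimeromorphic to an algebraic surface, yet the projective fibers form only a countable dense subset of the base. Projectivity of nearby fibers is therefore not a formal consequence of finite generation and base change; the paper must invoke the separate openness-of-projectivity theorem (Theorem~\ref{12.2.10.thm.S}, from \cite{k-sesh}), which uses the rational singularities of \emph{all} fibers --- available here precisely because canonical singularities deform to canonical singularities by (\ref{defs.of.can.lem}.2) --- in an essential way.
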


Here the $r$th plurigenus of $X_s$  is
$h^0(Y_s, \omega_{Y_s}^{r})$, where $Y_s\to X_s$ is any resolution of $X_s$. By \cite[VI.5.2]{nak-book}  (see also (\ref{defs.of.can.lem}.2)) $X_s$ has canonical  singularities, so this is the same as 
 $h^0(X_s, \omega_{X_s}^{[r]})$, where $\omega_{X_s}^{[r]}$ denotes the double dual of the $r$th tensor power $\omega_{X_s}^{\otimes r}$.

\medskip

{\it Comments \ref{mmp.extends.conj.thm.c.cor}.3.}
Many  cases of this have been proved, but
I believe that the general result is new,  even  for $X_0$ smooth and $S$ a disc.

For smooth surfaces proofs are given in  \cite{MR0112154, Iitaka69}, and for 3-folds 
with  terminal singularities in 
\cite[12.5.1]{km-flips}.
If $g$ is assumed projective, then of course all fibers are projective,
and deformation invariance of plurigenera was proved by \cite{siu-plur}  for $X_0$ smooth, and by \cite[Chap.VI]{nak-book} when $X_0$ has  canonical  singularities.
However,  frequently $g$ is not projective;  see Example~\ref{surf.fam.exmp}
for some smooth, 2-dimensional examples.
Many projective varieties have deformations that are not projective, not even algebraic in any sense; K3 and elliptic surfaces furnish the best known examples.

In Example~\ref{amp.K.quot.sing.exmp} we construct a  deformation of 
a projective surface with a quotient singularity and ample canonical class, whose general fibers are   non-algebraic, smooth surfaces  of Kodaira dimension 0. Thus canonical is likely the largest class of singularities where
Theorem~\ref{mmp.extends.conj.thm.c.cor} holds. See also Example~\ref{bad.fams.exmp} for surfaces with simple elliptic singularities. 

The projectivity of $X_0$ is essential in our proof, but (\ref{mmp.extends.conj.thm.c.cor}.1) should hold whenever $X_0$ is a proper algebraic space of general type with  canonical  singularities. Such results are proved in 
\cite{rao-tsa}, provided one assumes that  either $X_0$ is smooth and all fibers are Moishezon, or almost all fibers are  of general type.

Our main technical result says that the Minimal Model Program  works for $g:X\to S$.
For $\dim X_0=2$ and $X_0$ smooth, this goes back to \cite{MR0112154}. 
For $\dim X_0=3$ and terminal singularities,  this  was proved in  \cite[12.4.4]{km-flips}. The next result  extends these to all dimensions.

\begin{thm} \label{mmp.extends.conj.thm.c}
Let $g:X\to S$ be  a flat, proper morphism of reduced, complex analytic spaces. 
Fix a point $0\in S$ and assume that 
$X_0$ is  projective and  has canonical  singularities.
Then every sequence of MMP-steps
$X_0=X_0^0\map X_0^1\map X_0^2\map \cdots$ (see Definition~\ref{mmp.extends.conj.thm.defn})
extends to a sequence of MMP-steps  
$$
X=X^0\map X^1\map X^2\map \cdots,
$$
  over some open  neighborhood  $0\in U\subset S$.
\end{thm}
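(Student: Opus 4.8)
The plan is to argue by induction on the number of MMP-steps, so that it suffices to extend a single step: assuming the first $k$ steps have been extended to a flat, proper $g^k\colon X^k\to S$ over some neighborhood of $0$ whose central fiber is $X_0^k$ and still has canonical singularities (by (\ref{defs.of.can.lem}.2)), I would extend the $(k{+}1)$-st step and then iterate, shrinking $U$ each time. By Definition~\ref{mmp.extends.conj.thm.defn} this step is the contraction $\phi_0\colon X_0^k\to Z_0^k$ of a $K_{X_0^k}$-negative extremal ray $R_0$, followed by a flip when $\phi_0$ is small. The substance of the argument is therefore to extend a single extremal contraction, and, in the small case, to produce its flip, relatively over $S$.

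First I would realize $\phi_0$ as a \emph{polarized} contraction and extend the polarization. Writing $\phi_0$ via a semiample divisor $L_0=K_{X_0^k}+A_0$ with $A_0$ ample and $L_0$ nef and trivial exactly on $R_0$, the task is to produce a divisor on $X^k$ restricting to $L_0$. Since the relative dualizing sheaf $\omega_{X^k/S}$ already restricts to $\omega_{X_0^k}$ on the central fiber, it is enough to extend $A_0$; I would do this by analyzing the restriction map $\pic(X^k)\to\pic(X_0^k)$ (equivalently the obstruction living in the higher $R^{\bullet}g^k_*\o_{X^k}$) and using ampleness of $A_0$ together with the projectivity of $X_0^k$ to extend $A_0$ to a relatively ample $A$ over a smaller neighborhood $U$. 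Then $L=\omega_{X^k/S}+A$ restricts to $L_0$.

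Next I would run the relative contraction and flip machinery over $U$. Invoking the relative base-point-free/contraction theorem over the analytic base $S$ near the projective fiber $X_0^k$, and shrinking $U$, the divisor $L$ becomes relatively semiample and defines a contraction $\phi\colon X^k\to Z^k$ over $U$; one checks that $\phi|_{X_0^k}=\phi_0$ and that $Z^k\to U$ is again flat and proper with the expected central fiber $Z_0^k$. If $\phi$ is divisorial (or a Mori-fiber contraction) I set $X^{k+1}=Z^k$; if $\phi$ is small I construct the flip of $\phi$ relatively over $Z^k$ and call the result $X^{k+1}$. In either case I verify, using (\ref{defs.of.can.lem}.2) and the flatness and properness of the constructions, that $X^{k+1}\to S$ has central fiber $X_0^{k+1}$ with canonical singularities, closing the induction.

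The main obstacle is the extension-of-the-contraction step: the conditions ``nef'', ``extremal'', and ``contracts exactly $R_0$'' are not open in families, and over a merely analytic base one cannot invoke the global projective MMP. Concretely I expect two difficulties to dominate. The first is extending the polarizing divisor $A_0$: the restriction $\pic(X^k)\to\pic(X_0^k)$ need not be surjective, and controlling its obstruction is where projectivity of $X_0$ is genuinely used. The second is ensuring that the extended contraction $\phi$ does not acquire extra contracted curves on nearby fibers, i.e. that the exceptional or flipping locus stays confined to a deformation of the central-fiber locus; together with the relative construction of flips in the analytic category, this is the technical core of the argument.
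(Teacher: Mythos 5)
Your reduction to a single MMP-step matches the paper, but the way you extend that step diverges at exactly the points where the real work lies, and both of the ``main obstacles'' you flag are genuine gaps rather than technicalities. First, the paper never extends the polarizing divisor $A_0$, and for good reason: the restriction $\pic(X)\to\pic(X_0)$ can genuinely fail to be surjective, and the nearby fibers need not even be projective a priori --- their projectivity is a \emph{conclusion} of Theorem~\ref{mmp.extends.conj.thm.c.cor}, not a hypothesis, and it fails outside the general type setting (cf.\ Examples~\ref{amp.K.quot.sing.exmp} and~\ref{bad.fams.exmp}). Instead, the contraction $\phi_0:X_0\to Z_0$ is extended directly as a morphism using \cite{MR0304703} (or \cite[11.4]{km-flips}): a proper bimeromorphic contraction with $R^1(\phi_0)_*\o_{X_0}=0$ deforms along with $X_0$, yielding $\phi:X\to Z$ flat over the base with central fiber $Z_0$; the relative projectivity of $\phi$ then comes for free because $-K_X$ is $\phi$-ample. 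No relative base-point-free or contraction theorem over an analytic base is invoked, and none is available in this generality, so your appeal to such a theorem is essentially circular.

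Second, and more seriously, you leave the existence of the flip as a one-line assertion (``I construct the flip of $\phi$ relatively over $Z^k$''), but this is the technical core of the whole theorem. The paper produces $X^+=\proj_Z\oplus_{r\geq 0}\ \omega_Z^{[r]}(rZ_0)$ and must prove that this algebra is finitely generated near $Z_0$. That is done by combining three ingredients: Nakayama's extension theorem (Theorem~\ref{nak.3.8}), which gives the surjection $\oplus_{r}\, g_*\omega_Y^{r}(rY_0)\onto \oplus_{r}\,(g_0)_*\omega_{Y_0}^{r}$ for a suitable resolution $Y\to X$; the finite generation of $\oplus_{r}\,\omega_{Z_0}^{[r]}$ on the central fiber, which is where the projectivity of $X_0$ and \cite{bchm} actually enter; and Lemma~\ref{pt.fg.loc.fg.lem}, which propagates finite generation from the fiber over $0$ to a neighborhood of the compact $Z_0$. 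Without an argument of this kind your induction cannot close: in the analytic, non-projective setting there is no existence theorem for flips to fall back on. (A smaller point: Definition~\ref{mmp.extends.conj.thm.defn} also allows mixed steps, and all morphisms in an MMP-step are bimeromorphic, so Mori fiber contractions do not occur here.)
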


The proof is given in Paragraph~\ref{mmp.extends.conj.thm.c.pf} when $S$ is a disc $\dd$, and in Paragraph~\ref{mmp.extends.conj.thm.c.S.pf} in general.
The assumption that $X_0$ has  canonical singularities is necessary, as shown by semistable 3-fold flips \cite{km-flips}. Extending MMP steps from divisors with canonical singularities is also studied in \cite{k-amb}.

If $X_0$ is of general type, then
a suitable  MMP for $X_0$  terminates with a minimal model $X_0^{\rm m}$ by \cite{bchm}, which then extends to 
  $g^{\rm m}:X^{\rm m}_U\to U$ by Theorem~\ref{mmp.extends.conj.thm.c}. For  minimal models of varieties of general type, deformation invariance of plurigenera is easy, leading to a proof of (\ref{mmp.extends.conj.thm.c.cor}.1) in Paragraph~\ref{mmp.extends.conj.thm.c.cor.pf}. This also implies that all fibers are  bimeromorphic to a projective  variety.

If $X_0$ is smooth, then it is K\"ahler, and the $X_s$ are also  K\"ahler
by \cite{MR0112154}. A K\"ahler variety that is bimeromorphic to an algebraic variety is projective by \cite{Moi-66}. 

However, there are  families of surfaces with simple elliptic singularities
$g:X\to S$ such that $K_{X_0}$ is ample,
 all fibers are bimeromorphic to an algebraic surface, yet 
the projective fibers correspond to  a countable, dense set on the base; see Example~\ref{bad.fams.exmp}.

We use Theorem~\ref{12.2.10.thm.S}---taken from \cite[Thm.2]{k-sesh}---to obtain the projectivity of the fibers and
  complete the proof of
Theorem~\ref{mmp.extends.conj.thm.c.cor} in 
Paragraph~\ref{mmp.extends.conj.thm.c.cor.pf}.

\section{Examples  and consequences}

The first example shows that Theorem~\ref{mmp.extends.conj.thm.c.cor}
fails very badly for surfaces with non-canonical quotient singularities.

\begin{exmp} \label{amp.K.quot.sing.exmp}
We give an example of a flat, proper morphism of complex analytic spaces $g:X\to \dd$, such that
\begin{enumerate}
\item   $X_0$ is  a projective surface with a quotient singularity and ample canonical class, yet
\item  $X_s$ is smooth, non-algebraic, and of Kodaira dimension 0 for very general $s\in \dd$.
\end{enumerate}

Let us start with a K3 surface  $Y_0\subset \p^3$ with a hyperplane section $C_0\subset Y_0$ that is a rational curve with 3 nodes. We blow up the nodes
$Y'_0\to Y_0$ and  contract the birational transform of $C_0$ to get a surface $\tau_0:Y'_0\to X_0$. 
Let $E_1, E_2, E_3\subset X_0$ be the images of the 3 exceptional curves of the blow-up. 

By explicit computation,  we get a quotient singularity of type $\c^2/\frac18(1,1)$,  $(E_i^2)=-\frac12$ and $(E_i\cdot E_j)=\frac12$ for $i\neq j$. Furthermore,  $E:=E_1+E_2+ E_3\sim K_{X_0}$ and it is ample by the Nakai-Moishezon criterion.  (Note that $(E\cdot E_i)=\frac12$ and
$X_0\setminus E\cong Y_0\setminus C_0$ is affine.)

Take now a  deformation  $Y\to \dd$ of $Y_0$  whose very general fibers are non-algebraic K3 surfaces that contain no proper curves. Take 3 sections  $B_i\subset Y$ that pass through the 3 nodes of $C_0$. Blow them up and then contract the birational transform of $C_0$; cf.\ \cite{MR0304703}. In general \cite{MR0304703} says that the normalization of the resulting central fiber is $X_0$, but in our case 
the central fiber is isomorphic to $X_0$ since  $R^1(\tau_0)_*\o_{Y'_0}=0$. 
The contraction is an isomorphism on very  general fibers since there are no curves to contract. We get  $g:X\to \dd$ whose central fiber is $X_0$ and all other fibers are K3 surfaces blown up at 3 points.

In general, it is very unclear which complex varieties occur as deformations of projective varieties; see
\cite{kerr2021deformation} for some of their properties.
\end{exmp}

\begin{exmp} \label{surf.fam.exmp}\cite{Atiyah58}
 Let $S_0:=(g=0)\subset \p^3_{\mathbf x}$  and $S_1:=(f=0)\subset \p^3_{\mathbf x}$ be 
surfaces of the same degree. Assume that $S_0$ has only ordinary nodes,
$S_1$ is smooth,  $\pic(S_1)$ is generated by  the restriction of $\o_{\p^3}(1)$ and $S_1$ does not contain any of the singular points of $S_0$.  Fix $m\geq 2$ and consider
$$
X_m:=(g-t^mf=0)\subset \p^1_{\mathbf x}\times \a^1_t.
$$
The singularities are locally analytically of the form   $xy+z^2-t^m=0$. 
Thus $X_m$  is  locally analytically factorial if $m$ is odd.
If $m$ is even then  $X_m$ is factorial since the general fiber has Picard number 1, but it is not locally analytically factorial; blowing up $(x=z-t^{m/2}=0)$ gives a small resolution.  Thus we get that 
\begin{enumerate}
\item $X_m$ is  bimeromorphic to a proper,   smooth family of projective surfaces iff $m$ is even, but  
\item  $X_m$ is not bimeromorphic to a  smooth, projective family of  surfaces.
\end{enumerate}
\end{exmp}

\begin{exmp} \label{bad.fams.exmp}
Let $E\subset \p^2$ be a smooth cubic and take $r$ general lines  $L_i\subset \p^2$.  To get $S_0$, blow up all singular points of
$E+\tsum L_i$ and then contract the birational transform of $E+\tsum L_i$. 
A somewhat tedious computation shows that $K_{S_0}$ is ample for $r\geq 6$.
It has 1 simple elliptic singularity  (coming from $E$) and $r$ quotient singularities  (coming from the $L_i$).

Deform this example by moving the $3r$ points  $E\cap \tsum L_i$ into general position  $p^1_t, \dots, p^{3r}_t\in E$ and the points $L_i\cap L_j$  into general position on $\p^2$.
Blow up these points and then contract the birational transform of $E$
to get the surfaces $S_t$.  It has only 1 simple elliptic singularity  (coming from $E$).

We get a flat family of surfaces with central fiber $S_0$ and general fibers $S_t$.  Let $L$ denote the restriction of the line class on $\p^2$ to $E$. 

It is easy to see that 
such a surface $S_t$ is non-projective if the $p^i_t$ and $L$ are linearly independent in $\pic(E)$.
Thus   $S_t$ is not projective for very general $t$ and has Kodaira dimension 0.  

\end{exmp}

The next result is the scheme-theoretic version of Theorem~\ref{mmp.extends.conj.thm.c.cor}. Ideally it should be proved by the same argument. However, some of the references we use, especially
\cite{nak-book}, are worked out for analytic spaces, not for general schemes.
So for now we proceed in a somewhat roundabout way.

\begin{cor} \label{mmp.extends.alg.cor}
Let $S$ be a noetherian, excellent scheme over a  field of characteristic 0.
Let $g:X\to S$ be a flat, proper algebraic space. 
Fix a point $0\in S$ and assume that
$X_0$ is  projective, of general type and with  canonical  singularities.
Then   there is an open  neighborhood  $0\in S^\circ\subset S$ such that, for every $s\in S^\circ$,
\begin{enumerate}
\item the plurigenera  $h^0(X_s, \omega_{X_s}^{[r]})$ are independent of $s$ for every $r$, and
\item  the fiber $X_s$ is  projective.
\end{enumerate}
\end{cor}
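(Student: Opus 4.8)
The plan is to deduce Corollary~\ref{mmp.extends.alg.cor} from the already-established analytic statement, Theorem~\ref{mmp.extends.conj.thm.c.cor}, by spreading out, base change to $\mathbb{C}$, and analytification. The guiding observation is that both conclusions are \emph{geometric}: for a geometrically normal fiber (and canonical singularities are normal, and remain geometrically canonical in characteristic $0$) the number $h^0(X_s,\omega_{X_s}^{[r]})$ is unchanged under field extension of $\kappa(s)$, and projectivity of a proper algebraic space over a field is invariant under field extension. Hence it suffices to control the geometric fibers $X_{\bar s}$, and these depend only on the image of $s$ under a suitable morphism to a finite-type $\mathbb{C}$-scheme.

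First I would localize and descend. Since the conclusion concerns a neighborhood of $0$, I replace $S$ by an affine open neighborhood, so that $S$ is noetherian, excellent, and affine. By Noetherian approximation for algebraic spaces the whole configuration — the flat proper family $g$, an ample class witnessing projectivity of $X_0$, and a resolution witnessing general type — descends to a flat proper algebraic space $h:Y\to T$ over a scheme $T$ of finite type over $\mathbb{Q}$, together with a point $t_0\in T$ below $0$, so that $X\cong Y\times_T S$ near $0$ and $Y_{t_0}$ is again projective, of general type, and with canonical singularities (each of these being a constructible condition that spreads out). Choosing an embedding $\kappa(t_0)\hookrightarrow\mathbb{C}$ — possible since $\kappa(t_0)$ is finitely generated over $\mathbb{Q}$ — produces a $\mathbb{C}$-point $t_1$ of $T_{\mathbb{C}}:=T\times_{\mathbb{Q}}\mathbb{C}$ lying over $t_0$, and the fiber $Y_{t_1}=Y_{t_0}\otimes_{\kappa(t_0)}\mathbb{C}$ inherits all the hypotheses.

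Next I would analytify and apply the main theorem. Being of finite type over $\mathbb{C}$, the proper flat algebraic space $Y_{\mathbb{C}}\to T_{\mathbb{C}}$ has an analytification $Y_{\mathbb{C}}^{\rm an}\to T_{\mathbb{C}}^{\rm an}$ which is a flat proper morphism of complex analytic spaces, whose fiber over $t_1$ is the analytification of the projective variety $Y_{t_1}$. Theorem~\ref{mmp.extends.conj.thm.c.cor} then yields a Euclidean-open neighborhood $U\ni t_1$ over which all plurigenera are constant and all fibers are projective. To transfer this back, note that for every closed point $t\in U$, GAGA identifies $h^0(Y_t,\omega_{Y_t}^{[r]})$ with its analytic counterpart and shows $Y_t$ is a projective scheme exactly when $Y_t^{\rm an}$ is projective; so both conclusions hold at every $\mathbb{C}$-point of $U$. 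The locus in $T_{\mathbb{C}}$ where a given plurigenus attains its value at $t_1$, and the locus where the fiber is projective, are constructible; each contains the Euclidean neighborhood $U\cap T_{\mathbb{C}}(\mathbb{C})$, and a constructible set containing a Euclidean neighborhood of a point contains a Zariski-open neighborhood of that point (the Euclidean and Zariski closures of a constructible set coincide). Uniformity over all $r$ I would obtain by combining Zariski semicontinuity of plurigenera with finite generation of the relative canonical algebra, so that finitely many values of $r$ control the rest; intersecting the resulting finitely many Zariski neighborhoods and pulling back along $S\to T$ produces the required $S^\circ\ni 0$.

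The main obstacle is this final transfer step. Making the Euclidean-to-Zariski passage rigorous requires both the comparison of analytic and algebraic plurigenera and, more delicately, a GAGA/descent input guaranteeing that projectivity of the analytic fibers forces the algebraic fibers — a priori only algebraic spaces — to be projective schemes, together with the descent of projectivity and of $h^0(\omega^{[r]})$ along the extensions $\kappa(s)\subseteq\overline{\kappa(s)}$ used to pass to geometric fibers. Securing a single neighborhood valid simultaneously for all $r$, rather than one per $r$, is the other point that needs care, and is precisely where finite generation of the canonical ring enters.
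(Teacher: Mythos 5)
Your architecture --- spread out to finite type over $\mathbb{Q}$, base-change to $\mathbb{C}$, analytify, apply Theorem~\ref{mmp.extends.conj.thm.c.cor}, then pass from a Euclidean neighborhood back to a Zariski one via constructibility --- is a genuinely different route from the paper's, and most of its steps are sound: descent and ascent of projectivity, general type, and canonical singularities along field extensions in characteristic $0$; GAGA for proper algebraic spaces over $\mathbb{C}$; and the fact that a constructible set containing a Euclidean neighborhood of a closed point contains a Zariski neighborhood. The genuine gap is the unsupported assertion that ``the locus where the fiber is projective is constructible.'' Nothing in your argument supplies this: Theorem~\ref{mmp.extends.conj.thm.c.cor} only shows the projective locus is Euclidean open near $t_1$, and Euclidean open sets need not be constructible; Example~\ref{bad.fams.exmp} shows that for slightly worse singularities the projective fibers can form a countable dense subset of the base, so constructibility of the projective locus is precisely the kind of statement that needs the special hypotheses here. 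What is true in your situation is the content of Theorem~\ref{12.2.10.thm.S} (= \cite[Thm.~2]{k-sesh}): once the fibers have rational singularities and the family is bimeromorphic to a projective morphism (the relative canonical model, available after your plurigenus step), one gets a Zariski open neighborhood plus a Zariski stratification over which the family is projective. To close your argument you must invoke that theorem --- and then also check that the ``Zariski'' closed sets it produces in $T_{\mathbb{C}}^{\rm an}$ are algebraic, which is not automatic since closed analytic subsets of an affine variety need not be algebraic --- or prove an algebraic version of it. A smaller point of the same flavor: constructibility of $t\mapsto h^0(Y_t,\omega_{Y_t}^{[r]})$ is true but requires the hull/husk formalism (generic flatness and generic base change for $\omega_{Y/T}^{[r]}$ on a stratification), since $\omega_{Y_t}^{[r]}$ is not the restriction of one fixed coherent sheaf on $Y$.

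The paper sidesteps all of this by a different reduction: by noetherian induction it suffices to verify both conclusions at the generic points of the irreducible subvarieties $0\in T\subset S$, because a line bundle that is ample on the generic fiber is ample on the fibers over a dense open subset --- the easy, one-directional openness statement --- so no constructibility of the projective locus is ever needed. It then completes $T$ at $0$, cuts the residue field down to $\mathbb{C}$ using that only countably many coefficients occur, and identifies the resulting formal family with a pull-back of the universal analytic deformation $\mathbf{X}\to\defor(X_0)$ of \cite{bing}, to which Theorem~\ref{mmp.extends.conj.thm.c.cor} applies directly. In effect your spreading-out over $\mathbb{Q}$ is replaced by the universal deformation space, and the analytic theorem is only ever used at a single point of a single analytic family, rather than as a uniform constructible statement over a whole finite-type base.
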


\begin{proof} A proper algebraic space $Y$  over a field $k$ is projective iff 
$Y_K$ is  projective over $K$ for some field extension $K\supset k$. 
 Noetherian induction then shows that it is enough to prove the claims for the generic points of  the  completions    (at the  point $0\in S$) of irreducible subvarieties $0\in T\subset S$.
Since the defining equations of $\hat T$ and  of $X\times_S\hat T$ involve only countably many coefficients,  we may assume that the residue field is $\c$. 

 Consider now the local universal deformation space
$\defor(X_0)$  of $X_0$ in the complex analytic  category; see \cite{bing}. It is the germ of a complex analytic space and there is a complex analytic universal family
$
G: {\mathbf X}\to \defor(X_0).
$
Since a deformation over an Artin scheme is automatically  complex analytic, we see that
the formal completion
$
\hat G: \hat{\mathbf X}\to \widehat{\defor}(X_0)
$
is the universal formal deformation of $X_0$. 
In particular,  $X\times_S\hat T$ is the pull-back of
$ \hat G: \hat{\mathbf X}\to \widehat{\defor}(X_0)$
by a morphism  $\hat T\to \widehat{\defor}(X_0)$. 
Thus Theorem~\ref{mmp.extends.conj.thm.c.cor} implies both claims.
\end{proof}

\section{Relative MMP}

See \cite{km-book} for a general introduction to the minimal model program.

\begin{defn}[MMP-steps and their extensions] \label{mmp.extends.conj.thm.defn} 
Let $X\to S$ be a proper morphism of   complex analytic spaces with irreducible fibers.
Assume that $K_{X/S}$ is $\q$-Cartier.
By an {\it MMP-step} for  $X$ over $S$ we mean a diagram  
$$
\begin{array}{lcr}
X &\stackrel{\pi}{\map}&  X^+\\
\phi\searrow  && \swarrow \phi^+\\
& Z &
\end{array}
\eqno{(\ref{mmp.extends.conj.thm.defn}.1)}
$$
where all morphisms are bimeromorphic and proper over $S$, $-K_{X/S}$ is ample over $Z$,  $K_{X^+/S}$ is ample over $Z$ and
$\phi^+$ is small (that is, without exceptional divisors).

If $X$ is $\q$-factorial and  the relative Picard number of $X/Z$ is 1, then
 there are 2 possible  MMP steps:
\begin{itemize}
\item Divisorial:  $\phi$ contracts a single divisor and $\phi^+$  is the identity.
\item Flipping:  both $\phi$ and $\phi^+$ are small.
\end{itemize}
However, in general there is a more complicated possibility: 
\begin{itemize}
\item Mixed:  $\phi$ contracts (possibly several) divisors and $\phi^+$  is small.
\end{itemize}
 For our applications we only need to know that, by \cite[3.52]{km-book},  $X^+$ exists iff $\oplus_{r\geq 0}\ \omega_{Z/S}^{[r]} $  (which is equal to $\oplus_{r\geq 0} \phi_*\omega_{X/S}^{[r]} $) is a
finitely generated sheaf of $\o_Z$-algebras, and then 
$$
X^+=\proj_Z \oplus_{r\geq 0} \ \omega_{Z/S}^{[r]}.
\eqno{(\ref{mmp.extends.conj.thm.defn}.2)}
$$ 
We index a sequence of MMP-steps by setting $X^0:=X$ and 
$X^{i+1}:=(X^i)^+$.

Fix a point $s\in S$ and let
$X_s$ denote the fiber over $S$.  We say that a
sequence of MMP-steps (over $S$) $X^0\map X^1\map X^2\map \cdots$
{\it extends}   a sequence of MMP-steps  (over $s$)
$X_s^0\map X_s^1\map X_s^2\map \cdots$ if, for every $i$, 
$$
\begin{array}{rcl}
X_s^i\quad &\stackrel{\pi_s^i}{\map}& \quad X_s^{i+1}\\
\phi_s^i\searrow  && \swarrow(\phi_s^i)^+ \\
& Z_s^i &
\end{array}
%\left\{
\quad
\begin{array}{cc}
\mbox{is the fiber}\\
\mbox{over $s$ of}
\end{array}
%\right\}
\quad
\begin{array}{rcl}
X^i\quad &\stackrel{\pi^i}{\map}& \quad X^{i+1}\\
\phi^i\searrow  && \swarrow(\phi^i)^+ \\
& Z^i &
\end{array}
\eqno{(\ref{mmp.extends.conj.thm.defn}.3)}
$$
\end{defn}

\begin{say}[Proof of Theorem~\ref{mmp.extends.conj.thm.c} for $S=\dd$, the disc] \label{mmp.extends.conj.thm.c.pf}
 Since MMP-steps preserve canonical singularities, by induction it is enough to prove the claim for one MMP step. So we drop the upper index $i$ and identify $K_{X/\dd}$ with $K_X$.

Let $\phi_0:X_0\to Z_0$ be an extremal contraction.
By
\cite{MR0304703}\footnote{This should be changed to \cite[11.4]{km-flips}},
 it extends to  a contraction  $\phi:X\to Z$, where $Z$ is flat over $\dd$ with central fiber $Z_0$  since  $R^1(\phi_0)_*\o_{X_0}=0$. 
Note that  $K_X$ is $\q$-Cartier by (\ref{defs.of.can.lem}.1),  and 
$\phi$ is projective since $-K_X$ is $\phi$-ample. 

If $\phi_0$ is a divisorial contraction, then $K_{Z_0}$ is $\q$-Cartier,
and   so is  $K_Z$ by (\ref{defs.of.can.lem}.1). Thus $X^+=Z$. 

If $\phi_0$ is a flipping or mixed contraction, then
$K_Z$ is not $\q$-Cartier. By (\ref{mmp.extends.conj.thm.defn}.2),  
$$
X^+=\proj_Z \oplus_{r\geq 0} \ \omega_Z^{[r]},
\eqno{(\ref{mmp.extends.conj.thm.c.pf}.1)}
$$
provided $\oplus_{r\geq 0} \ \omega_Z^{[r]} $ is a
finitely generated sheaf of $\o_Z$-algebras.  (We have identified  $\omega_Z$ with $\omega_{Z/\dd}$.)

Functoriality works better if we twist by the line bundle $\o_Z(Z_0)$ and write it as
$$
X^+=\proj_Z \oplus_{r\geq 0} \ \omega_Z^{[r]}(rZ_0).
$$
Let $\tau: Y\to X$ be a projective  resolution of  $X$ (that is, $\tau$ is projective) such that $Y_0$, the bimeromorphic transform of $X_0$, is also smooth.
Set $g:=\phi\circ\tau$.

The hardest part of the proof is  Nakayama's theorem (\ref{nak.3.8}) which gives
 a surjection
$$
\oplus_{r\geq 0} g_*\omega_Y^{r}(rY_0)\onto 
\oplus_{r\geq 0} (g_0)_*\omega_{Y_0}^{r}.
\eqno{(\ref{mmp.extends.conj.thm.c.pf}.2)}
$$
Since $X_0$ has canonical singularities 
$\tau_*\omega_{Y_0}^{r}=\omega_{X_0}^{[r]}$, and hence
$g_*\omega_{Y_0}^{r}=\omega_{Z_0}^{[r]}$.
We also have a natural  inclusion
$g_*\omega_Y^{r}(rY_0)\into \omega_Z^{[r]}(rZ_0)$. 
Thus pushing forward (\ref{mmp.extends.conj.thm.c.pf}.2) we get a surjection
$$
\oplus_{r\geq 0} g_*\omega_Y^{r}(rY_0)\to 
\oplus_{r\geq 0} \ \omega_Z^{[r]}(rZ_0)\onto 
\oplus_{r\geq 0} \ \omega_{Z_0}^{[r]}.
\eqno{(\ref{mmp.extends.conj.thm.c.pf}.3)}
$$
Note that $\oplus_{r\geq 0} \ \omega_{Z_0}^{[r]} $ is a 
finitely generated sheaf of $\o_{Z_0}$-algebras, defining the
MMP-step of $X_0\to Z_0$.

Now (\ref{pt.fg.loc.fg.lem})  says that
$\oplus_{r\geq 0} \ \omega_Z^{[r]}(rZ_0) $ is also a
finitely generated sheaf of $\o_Z$-algebras, at least in some neighborhood of the  compact $Z_0$. \qed
\end{say}

Next we discuss various results used in the proof.

\begin{thm} \cite[VI.3.8]{nak-book} \label{nak.3.8}
Let $\pi:Y\to S$ be a projective, bimeromorphic  morphism of analytic spaces, $Y$ smooth and $S$ normal. 
Let $D\subset Y$ be a smooth, non-exceptional  divisor. Then the restriction map
$$
\pi_* \omega_Y^m(mD)\to \pi_* \omega_D^m \qtq{is surjective for $m\geq 1$.}\qed
$$
\end{thm}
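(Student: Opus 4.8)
The plan is to run the adjunction sequence and convert the asserted surjectivity into the vanishing of a connecting homomorphism, which is in turn an injectivity statement for multiplication by the defining section of $D$. Let $s_D$ be the tautological section of $\o_Y(D)$ cutting out $D$. Multiplication by $s_D$ gives the short exact sequence
\[ 0 \to \omega_Y^m\bigl((m-1)D\bigr) \xrightarrow{\ s_D\ } \omega_Y^m(mD) \to \omega_D^m \to 0, \]
in which the right-hand map is the $m$-th order Poincar\'e residue and the identification $\omega_Y^m(mD)\resto{D}\cong\omega_D^m$ is adjunction (here one uses that $D$ is smooth). Pushing forward by $\pi$ and reading the long exact sequence, the restriction map $\pi_*\omega_Y^m(mD)\to\pi_*\omega_D^m$ is surjective if and only if the connecting homomorphism into $R^1\pi_*\omega_Y^m((m-1)D)$ vanishes, equivalently if and only if
\[ \times s_D\colon R^1\pi_*\omega_Y^m\bigl((m-1)D\bigr)\longrightarrow R^1\pi_*\omega_Y^m(mD) \]
is injective. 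This is the statement I would aim to establish.

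For $m=1$ this is immediate: the source is $R^1\pi_*\omega_Y$, which already vanishes by Grauert--Riemenschneider vanishing, since $\pi$ is projective and bimeromorphic with $Y$ smooth (so $R^i\pi_*\omega_Y=0$ for $i>0$). Hence the connecting map is zero and the $m=1$ case follows at once; note this case does not even use that $D$ is non-exceptional.

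The case $m\geq 2$ is the heart of the matter and where I expect the main obstacle to sit. Rewriting $\omega_Y^m((m-1)D)=\omega_Y\otimes L^{m-1}$ with $L:=\omega_Y(D)$, the desired injectivity of $\times s_D$ is precisely a relative Kollár--Takegoshi-type injectivity statement for the canonical sheaf twisted by $L^{m-1}$; equivalently, one wants that no nonzero section of $R^1\pi_*(\omega_Y\otimes L^{m-1})$ is killed by $s_D$. The difficulty is that such injectivity (and the companion torsion-freeness) is only available when the twisting bundle is semipositive, whereas $L=\omega_Y(D)$ carries no positivity in general, and moreover every $R^1\pi_*$ is supported on the codimension $\geq 2$ image of the exceptional locus, so a naive torsion argument gives nothing.

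I would therefore reduce the general $m$ to the semipositive (ultimately $m=1$) case by a covering/semistable-reduction trick: after passing to a sufficiently divisible log resolution, build an $m$-fold cyclic cover $p\colon W\to Y$ adapted to $L$ so that $\omega_Y\otimes L^{m-1}$ is realized as a direct summand of $p_*\omega_W$, transport the section $s_D$ and the residue sequence along $p$, apply Grauert--Riemenschneider (the $m=1$ injectivity) upstairs where the relevant bundle is a genuine canonical sheaf, and descend the vanishing along $p_*$. The hard part will be arranging such a cover compatibly with the relative, purely analytic setting over $S$, checking that it respects the Poincar\'e residue, and verifying that non-exceptionality of $D$ (which guarantees that $\pi\resto{D}\colon D\to\pi(D)$ is bimeromorphic onto a divisor, so that the pluricanonical forms being extended genuinely come from the generic point of $\pi(D)$) is what makes the descent succeed. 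This compatibility is exactly the analytic injectivity/torsion-freeness package that the earlier chapters of the reference are devoted to, and it is where essentially all of the real work lies.
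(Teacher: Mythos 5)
There is a genuine gap, and it is exactly the one you flag yourself. First, for calibration: the paper does not prove this statement at all. It is imported verbatim from Nakayama's book, and the paper's entire ``proof'' is the one-line remark that it is the special case of \cite[VI.3.8]{nak-book} with $\Delta=0$ and $L=K_Y+D$; the real argument lives in Nakayama's Chapters V--VI, where the relative injectivity and torsion-freeness theorems are established in the analytic category (via Takegoshi-style harmonic theory rather than cyclic covers). So there is no in-paper argument for your sketch to match; it has to stand on its own.

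On its own terms, your opening moves are correct: the residue sequence $0\to\omega_Y^m((m-1)D)\to\omega_Y^m(mD)\to\omega_D^m\to 0$ is right, the equivalence of the asserted surjectivity with injectivity of $\times s_D$ on $R^1\pi_*\omega_Y^m((m-1)D)$ follows from exactness, and the $m=1$ case is indeed immediate from Grauert--Riemenschneider. But for $m\geq 2$ you have only named a strategy, not executed it, and the strategy as stated does not close. A cyclic cover realizing $\omega_Y\otimes L^{m-1}$ (with $L=\omega_Y(D)$) as a direct summand of $p_*\omega_W$ requires a section of some power $L^N$ with controlled zero divisor --- i.e.\ some effectivity or semiampleness of $L$ relative to $\pi$ --- and, as you yourself observe, $\omega_Y(D)$ carries no such positivity over a bimeromorphic contraction; the section you do have, $s_D$, is a section of $\o_Y(D)$, not of a power of $L$, so it does not feed into the Koll\'ar/Esnault--Viehweg injectivity package in the form you invoke. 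Since the entire content of the theorem for the application in Paragraph~\ref{mmp.extends.conj.thm.c.pf} is the case $m\geq 2$, the proposal as written establishes only the easy case and defers the theorem's actual content to an unproven (and, along the proposed route, obstructed) injectivity statement.
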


 This is a special case of \cite[VI.3.8]{nak-book} applied with $\Delta=0$ and $L=K_Y+D$. 
\medskip

{\it Warning.}  The assumptions of  \cite[VI.3.8]{nak-book} are a little hard to find. They are outlined 11 pages earlier in
\cite[VI.2.2]{nak-book}. It talks about varieties, which usually suggest algebraic varieties, but \cite[p.231, line~13]{nak-book} explicitly states that the proofs  work with analytic spaces; see also \cite[p.14]{nak-book}.
(The statements of  \cite{nak-book} allow for a boundary $\Delta$. However, $K_Y+D+\Delta$ should be $\q$-linearly equivalent to a $\z$-divisor and $\rdown{\Delta}=0$ is assumed on \cite[p.231]{nak-book}. There seem to be  few cases when both of these can be satisfied.)

\begin{lem} \cite[VI.5.2]{nak-book} \label{defs.of.can.lem}
Let $g:X\to S$ be a flat morphism of complex analytic spaces.
Assume that $X_0$ has a canonical singularity at a point $x\in X_0$. 
Then there is an open neighborhood  $x\in X^*\subset X$ such that 
\begin{enumerate}
\item $K_{X^*/S}$ is $\q$-Cartier, and 
\item all fibers of $g|_{X^*}: X^*\to S$ have canonical singularities.
\end{enumerate}
\end{lem}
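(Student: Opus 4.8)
The plan is to argue locally at $x$: shrink $S$ to a small ball and $X$ to a neighborhood of $x$ once and for all, and split the statement into its $\q$-Cartier part~(1), where the \emph{canonical} hypothesis does the real work, and the deformation-invariance of canonicity~(2), which I would then deduce from~(1). First I would record that a canonical singularity is rational, hence Cohen--Macaulay, so $X_0$ is normal and Cohen--Macaulay at $x$. Both normality ($R_1+S_2$) and Cohen--Macaulayness of the fibres of a flat morphism are open conditions on the total space, so after passing to a neighborhood $X^*$ I may assume every fibre $X_s$ is normal and Cohen--Macaulay. Then $g|_{X^*}$ is a Cohen--Macaulay morphism, the relative dualizing sheaf $\omega_{X^*/S}$ is $S$-flat and commutes with base change, and $\omega_{X^*/S}|_{X_s}\cong\omega_{X_s}$ is the reflexive dualizing sheaf of each fibre.

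For part~(1), let $m$ be the local index of $K_{X_0}$ at $x$ and shrink further so that $\omega_{X_0}^{[m]}\cong\o_{X_0}$. Writing $j\colon X^{\mathrm{sm}}\hookrightarrow X^*$ for the locus where $g$ is smooth --- whose complement meets every fibre in codimension $\ge 2$, since the fibres are normal --- the sheaf $\omega_{X^*/S}$ is invertible on $X^{\mathrm{sm}}$, and $\omega_{X^*/S}^{[m]}$ is its relative reflexive $m$-th power $j_*\big((\omega_{X^*/S}|_{X^{\mathrm{sm}}})^{\otimes m}\big)$. The plan is to show that the natural base-change map $\omega_{X^*/S}^{[m]}|_{X_0}\to\omega_{X_0}^{[m]}$ is an isomorphism near $x$, to lift the trivializing section of $\omega_{X_0}^{[m]}$ to a section of $\omega_{X^*/S}^{[m]}$, and to conclude by Nakayama's lemma that this lift is a frame. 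This makes $\omega_{X^*/S}^{[m]}$ invertible near $x$, i.e.\ $K_{X^*/S}$ is $\q$-Cartier.

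For part~(2), with $K_{X^*/S}$ now $\q$-Cartier I would compare relative and fibrewise discrepancies. Choosing a resolution $\tau\colon Y\to X^*$ that restricts to a resolution of the fibres near $0$, base-change compatibility of $\omega_{X^*/S}$ and adjunction give $K_{Y/S}|_{Y_s}=\tau_s^*K_{X_s}+\sum_i a_i\,(E_i\cap Y_s)$ with coefficients $a_i$ independent of $s$; since $X_0$ is canonical these satisfy $a_i\ge 0$, whence every $X_s$ is canonical. Equivalently, an inversion-of-adjunction argument shows that $X_0$ canonical together with $K_{X^*/S}$ $\q$-Cartier forces the pair $(X^*,X_0)$, and hence $X^*$, to be canonical near $X_0$, after which each fibre inherits canonicity by adjunction.

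The hard part will be part~(1): the reflexive power $\omega_{X^*/S}^{[m]}$ need not commute with restriction to $X_0$ for an arbitrary flat deformation, and indeed a flat deformation of a $\q$-Gorenstein singularity need not be $\q$-Gorenstein --- this is exactly the pathology exhibited for \emph{non-canonical} quotient singularities in Example~\ref{amp.K.quot.sing.exmp}. The technical heart is therefore a depth / local-cohomology estimate, using the rationality of $X_0$, that controls $\omega_{X^*/S}^{[m]}$ across the central fibre and yields the base-change isomorphism above; once this is in place, part~(2) is comparatively formal.
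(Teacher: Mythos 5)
The paper does not actually prove this lemma: it cites \cite[3.2.2]{k-thesis} and \cite[12.7]{k-flat} for (1) and \cite[VI.5.2]{nak-book} for (2), explicitly calling (2) ``the harder part.'' Measured against those proofs, your outline of (1) is the right strategy --- openness of normality and Cohen--Macaulayness of the fibres, flatness and base change of $\omega_{X/S}$ for a CM morphism, reduction to showing $\omega_{X^*/S}^{[m]}|_{X_0}\cong\omega_{X_0}^{[m]}$, then lifting the trivializing section --- but you defer exactly the step that carries all the weight. The missing input is that for a \emph{canonical} singularity each $\omega_{X_0}^{[i]}$ is a Cohen--Macaulay sheaf (because the index-one cover is again canonical, hence rational, hence CM, and the $\omega_{X_0}^{[i]}$ are direct summands of the pushforward of its structure sheaf); this is what makes the relative reflexive power flat and compatible with base change. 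Your observation that Example~\ref{amp.K.quot.sing.exmp} shows the necessity of some such hypothesis is apt, but as written (1) is an honest, incomplete sketch.

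The genuine gap is in (2), which you describe as ``comparatively formal''; it is in fact the hard half. Your discrepancy comparison presupposes a resolution $\tau\colon Y\to X^*$ that restricts to a resolution of \emph{every} nearby fibre. No such simultaneous resolution exists in general: choosing $Y$ smooth with $Y_0\to X_0$ a resolution, the fibre $Y_s$ is smooth only for $s$ outside an analytic subset, and since the lemma is local on $X$ (the morphism is not proper) that bad set need not even be closed in $S$ or avoid a neighborhood of $0$. Your argument therefore only yields canonicity of the \emph{generic} nearby fibre, while the lemma asserts it for all fibres over a neighborhood; moreover the coefficients of $E_i\cap Y_s$ need not be the discrepancies of $X_s$ when $E_i$ is vertical or when $\tau_s$ has exceptional divisors not arising as restrictions of $\tau$-exceptional ones. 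The fallback ``$(X^*,X_0)$ canonical $\Rightarrow X^*$ canonical $\Rightarrow$ each fibre canonical by adjunction'' fails at the last step: canonicity of the ambient space says nothing about a given Cartier divisor in it (a smooth $X$ contains arbitrarily singular hypersurfaces), and $(X^*,X_0)$ canonical does not transfer to $(X^*,X_s)$ for $s\neq 0$. The actual proof of (2) in \cite[VI.5.2]{nak-book} is where a pluricanonical extension theorem (the same circle of ideas as Theorem~\ref{nak.3.8}) enters, and some such non-formal analytic input appears unavoidable.
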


\begin{proof} (1)  is proved in
\cite[3.2.2]{k-thesis}; see also
\cite[12.7]{k-flat} and \cite[2.8]{k-modbook}.
The harder part is (2),  proved in  \cite[VI.5.2]{nak-book}.
\end{proof}
\medskip

{\it Remark \ref{defs.of.can.lem}.3.} If $S$ is smooth then $X^*$ has canonical singularities.
By induction, it is enough to prove this when $S=\dd$. 
Then the proof of \cite[VI.5.2]{nak-book} shows 
 that   even the pair  $(X^*, X_0\cap X^*)$
has canonical singularities.

\begin{lem} \label{pt.fg.loc.fg.lem}
Let $\pi:X\to S$  be a proper morphism of normal, complex spaces.
Let $L$ be a line bundle on $X$ and  $W\subset S$ a Zariski closed subset.
Assume that $\o_W\otimes_S \bigl(\oplus_{r\geq 0} \pi_* L^r\bigr)$ is a finitely generated sheaf of $\o_W$-algebras. 

Then every compact subset $W'\subset W$ has an open neighborhood
$W'\subset U\subset S$ such that $\o_U\otimes_S \bigl(\oplus_{r\geq 0} \pi_* L^r\bigr)$ is a finitely generated sheaf of $\o_U$-algebras. 
\end{lem}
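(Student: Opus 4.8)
The plan is to reformulate finite generation as the vanishing of a graded cokernel sheaf, to cut the infinitely many degreewise conditions down to finitely many coherent ones by a relative Serre argument on a proper relative $\proj$, and then to spread vanishing from $W$ to a neighborhood of the compact $W'$ using Nakayama and properness. First I would record that each graded piece $\mathcal R_r:=\pi_*L^r$ is coherent by Grauert's direct image theorem, so $\mathcal R:=\oplus_r\mathcal R_r$ is a graded $\o_S$-algebra with coherent graded pieces. Passing to a Veronese subalgebra (and using compactness of $W'$ to choose a single Veronese exponent), I may assume $\mathcal R\otimes_S\o_W$ is generated in degree $1$ near $W'$. Let $\mathcal A\subseteq\mathcal R$ be the $\o_S$-subalgebra generated by $\mathcal R_1$; it is finitely generated, generated in degree $1$, with coherent pieces $\mathcal A_n=\im\bigl(\sym^n\mathcal R_1\to\mathcal R_n\bigr)$. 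Writing $\mathcal Q:=\mathcal R/\mathcal A$, the statement becomes: $\mathcal Q$ vanishes on an open neighborhood of $W'$. By right exactness of $\otimes_S\o_W$ the hypothesis gives $\mathcal Q\otimes_S\o_W=0$ near $W'$, and since each $\mathcal Q_n$ is coherent, Nakayama shows the stalk $\mathcal Q_w$ vanishes at every $w\in W$ near $W'$.

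The main obstacle is exactly the upgrade from this stalkwise vanishing along $W$ to vanishing on a full neighborhood in $S$: a priori $\supp\mathcal Q=\bigcup_n\supp\mathcal Q_n$ is an infinite union of analytic sets whose members could accumulate toward $W'$, and $\mathcal Q$ is not coherent on the relative $\proj$, since $\mathcal R$ need not be a finite $\mathcal A$-module globally (otherwise finite generation would hold everywhere). I would resolve this with relative Serre theory carrying bounds uniform over compact subsets. Set $Y:=\proj_S\mathcal A$, a closed subspace of $\mathbb P_S(\mathcal R_1)$, so $q:Y\to S$ is proper and $\o_Y(1)$ is $q$-ample. For each fixed $N$ the $\o_S$-submodule $\mathcal R^{[N]}:=\mathcal A\cdot\bigl(\oplus_{n\le N}\mathcal R_n\bigr)$ is a finite $\mathcal A$-module, hence a coherent sheaf on $Y$. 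The key point to establish is a Subclaim: there exist a neighborhood $U_0$ of $W'$ and a single $N$ with $\mathcal R^{[N]}|_{U_0}=\mathcal R|_{U_0}$, i.e. $\mathcal R|_{U_0}$ is generated over $\mathcal A|_{U_0}$ in degrees $\le N$. Granting the Subclaim, $\mathcal Q|_{U_0}=(\mathcal R^{[N]}/\mathcal A)|_{U_0}$ is coherent on $Y|_{U_0}$, so $q\bigl(\supp\mathcal Q|_{U_0}\bigr)$ is a closed analytic subset of $U_0$; it misses $W'$ by the stalkwise vanishing, hence, as $W'$ is compact, misses a smaller open neighborhood $U$, on which $\mathcal Q|_U=0$ and therefore $\mathcal R|_U=\mathcal A|_U$ is finitely generated.

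It remains to prove the Subclaim, which is where the genuine work lies. The inputs are that $\mathcal R^{[N]}$ and $\mathcal R$ agree after restriction to $W$ for every $N\ge 1$ (degree-$1$ generation on $W$), together with relative Castelnuovo--Mumford/Serre bounds on $Y/S$ that are uniform over a compact neighborhood $K$ of $W'$; these bound, independently of the point of $K$, the degree in which a coherent sheaf on $Y$ becomes relatively globally generated. Applying such a uniform bound to the coherent sheaves $\widetilde{\mathcal R^{[N]}}$ and comparing them across $N$, while exploiting that the comparison maps are isomorphisms over $W$, I expect to produce the uniform $N$ and the neighborhood $U_0$ by a Nakayama argument at the points of $W$. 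I anticipate that establishing this uniform bound, and in particular checking that the low-degree discrepancy between $q_*\o_Y(n)$ and $\mathcal A_n$ does not interfere, will be the most delicate part of the argument.
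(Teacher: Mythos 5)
Your reduction of the lemma to the Subclaim is correctly set up, and you have isolated the real difficulty: the supports of the coherent quotients $\mathcal Q_n$ form an a priori infinite family of analytic sets that could accumulate against $W'$. But the Subclaim is essentially equivalent to the lemma itself, and the tool you propose for it does not close the gap. A Castelnuovo--Mumford/Serre-type bound on $Y=\proj_S\mathcal A$, even one uniform over a compact subset of $S$, is uniform only within a \emph{bounded} family of coherent sheaves; the regularity of $\widetilde{\mathcal R^{[N]}}$ depends on $N$, and nothing in your setup bounds the family $\{\widetilde{\mathcal R^{[N]}}\}_{N\ge 1}$. Equivalently, the increasing chain $\widetilde{\mathcal R^{[1]}}\subseteq\widetilde{\mathcal R^{[2]}}\subseteq\cdots$ would stabilize near $q^{-1}(W')$ by the Grauert--Remmert Noether lemma \emph{if} it sat inside a single coherent sheaf on $Y$, but the only candidate for an ambient sheaf comes from $\mathcal R$ itself, whose finiteness over $\mathcal A$ is what you are trying to prove. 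So the passage to $\proj_S\mathcal A$ merely transports the accumulation problem from $S$ to $Y$ without resolving it: ``comparing the $\widetilde{\mathcal R^{[N]}}$ across $N$ and applying Nakayama at points of $W$'' again only produces, for each degree, a neighborhood that may shrink as the degree grows.

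The missing idea is a single geometric datum that controls all degrees at once, and the paper finds it on $X$ rather than on $\proj_S\mathcal A$: after blowing up so that the relative base locus of $L$ is a Cartier divisor $D$, only finitely many components of $D$ meet the compact set $\pi^{-1}(W')$, and after shrinking $S$ one may assume every component of $D$ meets $\pi^{-1}(W)$. The degree-one generation over $W$ then forces $\bs(L^r)=rD$ for every $r$: the inclusion $\bs(L^r)\subseteq rD$ is automatic since $L(-D)$ is relatively globally generated, and equality holds along $\pi^{-1}(W)$, hence everywhere because each component of $D$ meets $\pi^{-1}(W)$. This one divisor $D$ is the uniform-in-$r$ statement your argument lacks; replacing $L$ by $L(-D)$ leaves the algebra $\oplus_r\pi_*L^r$ unchanged and reduces to a relatively globally generated bundle, where finite generation follows from relative ampleness on the Stein factorization. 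As written, your Subclaim is an unproven restatement of the lemma rather than a step toward it.
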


\begin{proof}  The question is local on $S$, so we may as well assume  that
$W$ is a single point. 
We may also assume that $\o_W\otimes_S \bigl(\oplus_{r\geq 0} \pi_* L^r\bigr)$ is generated by $\pi_* L$.   After suitable blow-ups we are reduced to the case when the base locus of $L$ is a Cartier divisor $D$. By passing to a smaller neighborhood, we may assume that every irreducible component of $D$ intersects $\pi^{-1}(W)$.
By the Nakayama lemma, the  base locus of $L^r$ is a subscheme of $rD$ that agrees with it along $rD\cap \pi^{-1}(W)$. 
Thus  $rD$ is the  the base locus of $L^r$ for every $r$.
We may thus replace $L$ by $L(-D)$ and assume that $L$ is globally generated.

Thus $L$ defines a morphism  $X\to \proj_S\oplus_{r\geq 0} \pi_* L^r$,
let $\pi':X'\to S$ be its Stein factorization.  Then $L$ is the
pull-back of a  line bundle  $L'$ that is ample on  $X'\to S$ and
$\oplus_{r\geq 0} \pi_* L^r=\oplus_{r\geq 0} \pi'_* {L'}^r$
is finitely generated. \end{proof}

\begin{say}[Proof of Theorem~\ref{mmp.extends.conj.thm.c} for general $S$] \label{mmp.extends.conj.thm.c.S.pf}
  As in Paragraph~\ref{mmp.extends.conj.thm.c.pf}, it is enough to prove the claim for one MMP step, so let $\phi_0:X_0\to Z_0$ be an extremal contraction and  $\phi:X\to Z$ its extension. As before,  $Z$ is flat over $S$ with central fiber $Z_0$.

We claim that, for every $r$, 
\begin{enumerate}%\setcounter{enumi}{1}
\item $\omega_{Z/S}^{[r]}$ is flat over $S$, and
\item $\omega_{Z/S}^{[r]}|_{Z_0}\cong \omega_{Z_0}^{[r]}$.
\end{enumerate}
In the language of \cite{k-hh} or \cite[Chap.9]{k-modbook}, this says that
$\omega_{Z/S}^{[r]}$ is its own relative hull. 
There is an issue with precise references here, since \cite[Chap.9]{k-modbook}
is written in the algebraic setting. However, \cite[9.72]{k-modbook}
considers hulls over the spectra of complete local rings. Thus we get  that there is a unique largest subscheme
$\hat S^u\subset \hat S$ (the formal completion of $S$ at $0$) such that
(1--2) hold after base change to $\hat S^u$. 

By Paragraph~\ref{mmp.extends.conj.thm.c.pf} we know that 
(1--2) hold after base change to any disc $\dd\to S$, which implies that
$\hat S^u= \hat S$. That is, 
(1--2) hold for $\hat S$. 
Since both properties are invariant under formal completion, we are done.

Now we know that 
$$
X^+:=\proj_Z \oplus_{r\geq 0} \ \omega_{Z/S}^{[r]},
\eqno{(\ref{mmp.extends.conj.thm.c.S.pf}.3)}
$$
is flat over $S$ and its central fiber is $X^+_0$. 
Thus it gives the required extension of the flip of $X_0\to Z_0$. 
\qed
\end{say}

\section{Proof of Theorem~\ref{mmp.extends.conj.thm.c.cor}}

 We give a proof using only the $S=\dd$ case of 
Theorem~\ref{mmp.extends.conj.thm.c}.

\begin{say}%[Proof of Theorem~\ref{mmp.extends.conj.thm.c.cor}] 
\label{mmp.extends.conj.thm.c.cor.pf}

Fix $r\geq 2$ and assume first that  $S=\dd$. Since $X_0$ is of general type, 
a suitable  MMP for $X_0$ ends with a minimal model  $X_0^{\rm m}$, and,
by Theorem~\ref{mmp.extends.conj.thm.c},  $X_0\map X_0^{\rm m}$ extends to a  fiberwise bimeromorphic map  $X\map X^{\rm m}$. 
We have $g^{\rm m}: X^{\rm m}\to \dd$. 
(From now on, we replace $\dd$ with a smaller disc whenever necessary.)
Since $K_{X_0^{\rm m}}$ is nef and big,
the higher cohomology groups of $\omega_{X_0}^{[r]}$ vanish for $r\geq 2$.
Thus $s\mapsto  H^0(X^{\rm m}_s,  \omega_{X^{\rm m}_s}^{[r]}) $
is locally constant at the origin.

By (\ref{defs.of.can.lem}.2)  $X_s$ and $X^{\rm m}_s $ both have canonical singularities, so they have the same plurigenera.  Therefore
 $s\mapsto  H^0(X_s,  \omega_{X_s}^{[r]}) $
is also locally constant  at the origin. By Serre duality, 
the deformation invariance of $H^0(X_s,  \omega_{X_s}) $
is equivalent to the deformation invariance of $H^n(X_s,  \o_{X_s}) $.
In fact, all the $H^i(X_s,  \o_{X_s}) $ are deformation invariant. 
For this the key idea is in
\cite{dub-jar}, which treats deformations of varieties with normal crossing singularities. The method works for varieties with canonical (even log canonical) singularities; this is worked out in \cite[Sec.2.5]{k-modbook}.

For arbitrary $S$, note that
  $s\mapsto  H^0(X_s,  \omega_{X_s}^{[r]} )$ is a constructible function on $S$, thus locally constant at $0\in S$ iff it is locally constant on every disc $\dd\to S$. 
Once $s\mapsto  H^0(X_s,  \omega_{X_s}^{[r]}) $ is locally constant at $0\in S$,
Grauert's theorem guarantees that 
$g_*\omega_{X/S}^{[r]}$ is locally free at $0\in S$ and commutes with base changes.

In principle it could happen that for each $r$ we need a smaller and smaller neighborhood, but  the same neighborhood works for all $r\geq 1$ by Lemma~\ref{pt.fg.loc.fg.lem}.

 Thus the plurigenera are deformation invariant,  all fibers are of general type, and   $g$ is fiberwise bimeromorphic to the relative canonical model
$$
X^{\rm c}:=\proj_{S} \oplus_{r\geq 0} g^{\rm m}_*\omega_{X^{\rm m}/S}^{[r]},
$$
which is projective over $S$. 
The projectivity of all fibers  now follows from the more precise  Theorem~\ref{12.2.10.thm.S}.
 \qed
\end{say}

The following is a special case of \cite[Thm.2]{k-sesh}.

\begin{thm} \label{12.2.10.thm.S}
  Let $g:X\to S$ be a  flat, proper morphism of complex analytic spaces
  whose fibers have rational singularities only. 
Assume that
 $g$ is bimeromorphic to a projective morphism $g^{\rm p}:X^{\rm p}\to S$,
 and  $X_0$ is projective for some $0\in S$. 

Then there is a Zariski  open neighborhood
$0\in U\subset S$ and a locally closed, Zariski stratification
$S=\cup_i S_i$ such that each
$$
g|_{X_i}:X_i:=g^{-1}(S_i)\to S_i \qtq{is projective.}\hfill\qed
$$
\end{thm}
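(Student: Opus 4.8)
The plan is to produce, over each stratum, a relatively ample line bundle on $X$; projectivity then follows since $g$ is proper. First I would resolve the bimeromorphic map by choosing a space $W$ carrying proper bimeromorphic morphisms $p:W\to X$ and $q:W\to X^{\rm p}$ over $S$. Fix a line bundle $H^{\rm p}$ on $X^{\rm p}$ that is ample over $S$ and set $M:=q^*H^{\rm p}$, a line bundle on $W$ that is semiample and big on every fiber of $g_W:=g\circ p=g^{\rm p}\circ q$. The task then becomes: find a $\q$-combination $M+\sum_i a_iE_i$, where the $E_i$ are the $p$-exceptional divisors, that is numerically a pullback of a class on $X$ and whose descent is ample on the fibers of $g$.

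The descent condition is linear: on the fiber $W_s$, the class $M+\sum_i a_iE_i$ is a pullback from $X_s$ exactly when its intersection with every $p$-contracted curve vanishes, a finite system of linear equations in the $a_i$ whose coefficients are intersection numbers on $W_s$. Because the fibers have rational singularities, a numerically trivial-on-contracted-curves class does descend (after passing to a multiple, and to a neighborhood) to a genuine line bundle on $X_s$, so solvability of this system controls the existence of the candidate polarization. On the central fiber $X_0$, which is projective by hypothesis, an honest ample class exists; pulling it back to $W_0$ and subtracting $M|_{W_0}$ exhibits coefficients $a_i^0$ that solve the system and make the descended class ample. Thus the data we need is present at $0\in S$.

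Next I would spread this out. The intersection numbers defining the descent equations, together with the numerical class being tested for ampleness, vary constructibly over $S$ through the local systems $R^\bullet g_{W*}$ and the N\'eron--Severi data of the fibers. Hence the locus where the linear system is solvable \emph{and} the resulting descended class is ample on $X_s$ is Zariski locally closed. The openness of relative ampleness in an analytic family — the delicate point, since in the analytic setting the projective fibers generally form only a countable union and ampleness is not automatically open — is exactly the content of the Seshadri-type criterion \cite{k-sesh}, which supplies a numerical test for relative projectivity that is stable under small deformation. Granting this, the locus containing $0$ is a stratum $S_0$ over which a relatively ample line bundle on $X$ exists, so $X$ is projective over $S_0$; noetherian induction on the complement produces the remaining strata $S_i$, and $U$ is a neighborhood of $0$ on which this finite stratification is defined.

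The main obstacle is precisely the openness and constructibility in the third step. Fiberwise projectivity is a N\'eron--Lefschetz phenomenon — whether a transcendentally varying $(1,1)$-class stays algebraic is governed by a countable union of closed conditions — and in general this produces infinitely many strata. The hypothesis that $g$ is globally bimeromorphic to the projective morphism $g^{\rm p}$ is what tames this: it confines the relevant vectors to a fixed finite-dimensional space spanned by the $E_i$ and $M$, and forces the descent-plus-ampleness locus to be cut out by finitely many algebraic conditions near $0$, so that the stratification is an honest finite Zariski stratification rather than a countable one. Making the passage from ``the class remains of type $(1,1)$'' to ``the class is genuinely relatively ample'', uniformly in families, rigorous is where the real work — and the appeal to \cite{k-sesh} — lies.
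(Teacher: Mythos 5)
The paper does not actually prove this statement: it is imported verbatim as a special case of \cite[Thm.~2]{k-sesh}, with the \textsc{qed} sign already in the statement. So the only fair comparison is between your sketch and that external result. Measured against it, your proposal has a genuine circularity: the step you yourself identify as ``where the real work lies'' --- that the locus where a candidate class descends and stays ample is Zariski locally closed, i.e.\ the openness of fiberwise projectivity in an analytic family --- is resolved by citing \cite{k-sesh}, which is precisely the theorem under discussion. Everything before that point is a (reasonable) reduction; the content of the theorem is exactly the part you defer. A proof would have to actually run the Seshadri-type argument: for spaces with rational singularities, ampleness of a numerical class is detected by a Seshadri-type inequality $(L\cdot C)\ge \varepsilon\,\mult_xC$, and one must show this inequality propagates from the fiber $X_0$ to nearby fibers and cuts out constructible loci. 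None of that appears in your outline.

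There is also a concrete error in the linear-algebra setup. You claim that pulling back an ample class $A$ on $X_0$ to $W_0$ and subtracting $M|_{W_0}$ produces a combination of the $p$-exceptional divisors $E_i$. There is no reason for $p_0^*A-M|_{W_0}$ to lie in the span of the $E_i$: the class $M=q^*H^{\rm p}$ sees only the single ray generated by $H^{\rm p}$, whereas $\pic(X_0)\otimes\q$ is generated (modulo exceptional divisors of $p_0$ and $q_0$) by all of $\pic(X_0^{\rm p})\otimes\q$, not by $H^{\rm p}$ alone; and the $q_0$-exceptional divisors are missing from your span as well. One must at least work with $q^*\pic(X^{\rm p})$ together with the exceptional divisors of both $p$ and $q$, and even then the assertion that every ample class on $X_0$ is hit requires an argument (one needs some control of Weil versus Cartier classes on the fibers, which is where the rational-singularities hypothesis enters for descent). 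As written, the existence of the coefficients $a_i^0$ at the central fiber --- the anchor of your whole spreading-out argument --- is not established.
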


\section{Open problems}

For deformations of varieties of general type, the following should be  true.

\begin{conj} Let $X_0$ be a projective variety of general type with canonical singularities. Then its universal deformation space  $\defor (X_0)$ has 
a representative  ${\mathbf X}\to S$ where $S$ is a scheme of finite type and
 ${\mathbf X}$ is an algebraic space.
\end{conj}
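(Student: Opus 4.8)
The plan is to algebraize the analytic Kuranishi germ $\defor(X_0)$ by comparing it with the (already known to be algebraic) moduli space of canonically polarized varieties with canonical singularities, using the relative canonical model as a bridge and the relative MMP of Theorem~\ref{mmp.extends.conj.thm.c} to pass back and forth. First I would fix a representative $G\colon {\mathbf X}\to S_{\rm an}$ of the universal analytic deformation (in the sense of \cite{bing}) and shrink $S_{\rm an}$ so that Theorem~\ref{mmp.extends.conj.thm.c.cor} applies: every fiber $X_s$ is then projective, of general type, with canonical singularities, and the plurigenera $h^0(X_s,\omega_{X_s}^{[r]})$ are constant. Running the relative MMP over $S_{\rm an}$ and passing to the relative canonical model produces a family ${\mathbf X}^{\rm c}:=\proj_{S_{\rm an}}\bigoplus_{r\ge 0} G^{\rm m}_*\omega^{[r]}_{{\mathbf X}^{\rm m}/S_{\rm an}}$, projective over $S_{\rm an}$, whose fibers are the canonical models $X_s^{\rm c}$: canonically polarized varieties with canonical singularities. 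Because the plurigenera are constant, the $X_s^{\rm c}$ share a single Hilbert polynomial with respect to a uniform very ample power of $K$, so they form a bounded family.

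Next I would invoke the moduli theory of such canonical models. The moduli functor of canonically polarized varieties with canonical singularities and fixed Hilbert polynomial is represented by an algebraic space (in fact a quasi-projective scheme) $M$ of finite type over $\c$, by the KSBA-type theory developed in \cite{k-modbook}. The family ${\mathbf X}^{\rm c}\to S_{\rm an}$ then yields a holomorphic classifying map $S_{\rm an}\to M^{\rm an}$, so $\defor(X_0)$ maps to the deformation germ of $X_0^{\rm c}$, which is algebraic because it is cut out of $M$. The remaining step is to descend algebraicity from $X_0^{\rm c}$ back to $X_0$: the fibers of $\defor(X_0)\to\defor(X_0^{\rm c})$ parametrize the varieties sharing a given canonical model, and these are produced from $X^{\rm c}$ by the relative MMP of Theorem~\ref{mmp.extends.conj.thm.c}, which is an algebraic procedure. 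Formalizing this, I would apply Artin's algebraization theorem to $\defor(X_0)$, feeding in the boundedness above as the finite-type and effectivity input, and reconstruct the total space ${\mathbf X}$ as an algebraic space by running the relative MMP over the resulting finite-type base.

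The main obstacle is precisely this passage between $X_0$ and its canonical model. Deformations of a variety with canonical singularities that is not itself canonically polarized have no separated, finite-type moduli of their own; one controls them only through $X^{\rm c}$, and the ambiguity introduced by the MMP---the various marked minimal and canonical models with the same $X^{\rm c}$, the failure of $\q$-factoriality, and the presence of flops---must be shown to be algebraic and of finite type. Equivalently, the hard point is to verify Artin's effectivity and boundedness conditions for the deformation functor of $X_0$ itself, not merely for $X_0^{\rm c}$, and then to match the analytic germ with its algebraic counterpart via Artin approximation. I expect this comparison, rather than the construction of the algebraic moduli of canonical models, to be the crux of the argument.
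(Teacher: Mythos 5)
This statement is posed as an open problem in the paper: it appears in the final section as a Conjecture, with no proof offered, so there is nothing in the paper to compare your argument against. Your proposal must therefore stand on its own, and as written it does not close the problem; indeed you candidly flag the decisive step as unresolved. The reduction to the canonical model is the easy half: shrinking $S_{\rm an}$ so that Theorem~\ref{mmp.extends.conj.thm.c.cor} applies, forming ${\mathbf X}^{\rm c}\to S_{\rm an}$, and mapping to the (genuinely algebraic, finite-type) KSBA-type moduli of canonically polarized varieties is all plausible. The gap is everything after that.

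Concretely, two things are missing. First, Artin algebraization applied to $\defor(X_0)$ requires effectivity of the formal universal deformation, and the standard route to effectivity (Grothendieck existence) needs an ample line bundle on $X_0$ that extends over the formal neighborhood; no such extension is available here --- the whole point of the paper is that projectivity of nearby fibers is a theorem, not that a fixed polarization deforms, and in general it does not (even formally). Second, your descent from $\defor(X_0^{\rm c})$ back to $\defor(X_0)$ runs the MMP in the wrong direction: Theorem~\ref{mmp.extends.conj.thm.c} extends MMP steps \emph{from} $X$ \emph{to} its canonical model, and there is no functorial inverse that reconstructs $X$ from $X^{\rm c}$. The fibers of $\defor(X_0)\to\defor(X_0^{\rm c})$ parametrize varieties sharing a canonical model, a set that involves choices of partial resolutions, non-$\q$-factorial intermediate models, and flops, and is not known to be representable, separated, or of finite type. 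Until you can either produce a deforming polarization on $X_0$ itself or prove that this comparison morphism is algebraic and of finite type, the argument does not go through; this is precisely why the statement remains a conjecture.
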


For  varieties of non-general type, the following is likely true
\cite[1.10]{rao-tsa}.

\begin{conj} \label{mmp.extends.conj.thm.c.cor.ng}
Let $g:X\to S$ be a flat, proper morphism of complex analytic spaces. Assume that
$X_0$ is  projective and with  canonical  singularities.
Then  
 the plurigenera  $h^0(X_s, \omega_{X_s}^{[r]})$ are independent of $s\in S$ for every $r$, in some neighborhood of $0\in S$. 
\end{conj}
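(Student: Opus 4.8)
The plan is to keep the architecture of the general-type proof in Paragraph~\ref{mmp.extends.conj.thm.c.cor.pf} and to replace the single step that used bigness. As there, I would first reduce to $S=\dd$: since $s\mapsto h^0(X_s,\omega_{X_s}^{[r]})$ is constructible, local constancy at $0$ can be tested on every disc $\dd\to S$, and a uniform neighbourhood for all $r$ is supplied by Lemma~\ref{pt.fg.loc.fg.lem}. Over the disc I would run an MMP for $X_0$ --- granting that it terminates with a minimal model $X_0^{\rm m}$, which in the non-general-type range is itself part of the minimal model conjectures --- and extend each step by Theorem~\ref{mmp.extends.conj.thm.c} to a relative minimal model $g^{\rm m}:X^{\rm m}\to\dd$. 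Because each step preserves canonical singularities and $\omega_{Z/\dd}^{[r]}$ is its own relative hull (the flatness and base-change statements of Paragraph~\ref{mmp.extends.conj.thm.c.S.pf}), the plurigenera of $X_s$ agree with those of $X_s^{\rm m}$, and by Lemma~\ref{defs.of.can.lem} all fibres still have canonical singularities. So I may assume $K_{X^{\rm m}/\dd}$ is relatively nef and $K_{X_0^{\rm m}}$ is nef.

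At this point the departure from the general type case becomes visible. Grauert upper semicontinuity gives $h^0(X_s^{\rm m})\le h^0(X_0^{\rm m})$ for $s$ near $0$ for free, so the whole content is the reverse inequality: \emph{every} pluricanonical section on $X_0^{\rm m}$ must extend to a neighbourhood. When $K_{X_0^{\rm m}}$ is big this is immediate, since Kawamata--Viehweg vanishing kills $R^{>0}g^{\rm m}_*\omega^{[r]}$ and base change forces local freeness; without bigness this vanishing is unavailable, and the extension of sections becomes the genuinely hard, Siu-type phenomenon. I see two routes. The first is to adapt the Ohsawa--Takegoshi $L^2$ extension theorem: $X_0^{\rm m}$ is the smooth divisor $(t=0)$ in $X^{\rm m}$ with $\omega_{X_0^{\rm m}}\cong\omega_{X^{\rm m}/\dd}|_{X_0^{\rm m}}$, and $X^{\rm m}$ is holomorphically convex over the Stein disc. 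One would build a singular metric of semipositive curvature on $K_{X_0^{\rm m}}$ with trivial multiplier ideal out of the finitely many lower pluricanonical sections, as in Siu's induction, and extend each section with $L^2$ control.

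The second route is to reduce to the big case through the Iitaka fibration. Assuming relative abundance, $K_{X^{\rm m}/\dd}$ is semiample and yields a fibration $h:X^{\rm m}\to C$ over $\dd$ whose fibres have Kodaira dimension $0$; the canonical bundle formula then writes $K_{X^{\rm m}/\dd}\simq h^*\bigl(K_{C/\dd}+B+M\bigr)$ with $B$ the discriminant and $M$ the nef moduli part, so that $g^{\rm m}_*\omega_{X^{\rm m}/\dd}^{[r]}$ becomes the pushforward from $C$ of a pluri-log-canonical sheaf whose positivity is big in the $C$-direction. On $C$ the vanishing method of Paragraph~\ref{mmp.extends.conj.thm.c.cor.pf} then counts sections, \emph{provided} $B$ and $M$ vary in a deformation-invariant way; the latter is essentially a statement about the variation of Hodge structure along the Iitaka fibration, whose dimension count should follow from the deformation invariance of the Hodge numbers $h^i(X_s,\o_{X_s})$ that the paper already obtains for canonical singularities by the method of \cite{dub-jar} and \cite[Sec.2.5]{k-modbook}. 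Once the sections are shown to extend by either route, constancy of $h^0$ follows and Grauert gives local freeness of $g^{\rm m}_*\omega^{[r]}$; no projectivity conclusion is needed, since only the plurigenera are asserted.

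The main obstacle is exactly the step that bigness removed. In the first route it is proving an Ohsawa--Takegoshi extension across the central fibre of a \emph{non-projective} proper analytic family, where no global ample bundle is available to seed Siu's inductive construction of metrics. In the second route it is establishing relative abundance (hence the fibration $h$) and, above all, controlling the moduli part $M$ and the discriminant $B$ across the family tightly enough to keep the section count on $C$ constant. Either way the crux is that deformation invariance of plurigenera outside the general type range is precisely the analytic, Siu-type extension problem, and the novelty of the conjecture over Theorem~\ref{mmp.extends.conj.thm.c.cor} is the simultaneous loss of projectivity of the family and of bigness of the canonical class.
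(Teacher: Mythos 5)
There is no proof to compare against: the statement you were given is stated in the paper as a \emph{conjecture} (Conjecture~\ref{mmp.extends.conj.thm.c.cor.ng}), and the paper offers no proof of it --- only a short discussion of why the strategy of Theorem~\ref{mmp.extends.conj.thm.c.cor} breaks down outside the general type case. Your proposal is, in essence, that same discussion expanded: it is a conditional outline, not an argument, and each of the conditions you ``grant'' is itself an open problem. Concretely: (i) termination of the MMP for $X_0$ is only known in the general type case (via \cite{bchm}); for non-general type in dimension $\geq 4$ it is open, so your relative minimal model $g^{\rm m}:X^{\rm m}\to\dd$ need not exist; (ii) abundance, which your second route requires to produce the Iitaka fibration $h:X^{\rm m}\to C$, is likewise open in this range; and (iii) even granting a good minimal model with $\omega_{X_0^{\rm m}}^{[r]}$ globally generated, the deformation invariance of plurigenera is still unknown --- this last step is precisely the paper's Conjecture~\ref{pluri.inv.abound.conj}, known only when $X_0$ is projective with terminal singularities \cite[12.5.5]{km-flips} (and in the Moishezon and class $\mathcal C$ variants), or for projective $X_0$ with canonical singularities by \cite[VI.3.15--16]{nak-book}, whose proof is tied to projectivity of the total family.

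Your diagnosis of where the difficulty sits is accurate and matches the paper's own comments: the loss of bigness removes the vanishing theorem that made the section-counting trivial on the minimal model, and the loss of projectivity of the family removes the ample bundle needed to seed a Siu-type $L^2$ extension argument. But neither of your two routes closes this gap --- the first presupposes an Ohsawa--Takegoshi/Siu induction in a non-projective proper analytic family (not available), and the second presupposes abundance plus deformation-invariant control of the discriminant $B$ and moduli part $M$ of the canonical bundle formula (also not available, and not a consequence of the invariance of the $h^i(X_s,\o_{X_s})$ that the paper establishes). So the proposal should be read as a research plan identifying the correct obstructions, not as a proof; the statement remains open, as the paper itself asserts.
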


{\it Comments.} One can try to follow the proof of 
Theorem~\ref{mmp.extends.conj.thm.c.cor}.
If $X_0$ is not of general type, we run into several difficulties in relative dimensions $\geq 4$.
MMP is not know to terminate and even if we get a minimal model,
abundance is not known. If we have a good minimal model, 
then we run into the  following.

\begin{conj} \label{pluri.inv.abound.conj}
Let $X$ be a complex space and
 $g:X\to S$  a flat, proper morphism. Assume that
$X_0$  is projective,  has canonical  singularities and $\omega_{X_0}^{[r]}$ is globally generated for some $r>0$. 
Then the plurigenera are locally constant at $0\in S$.
\end{conj}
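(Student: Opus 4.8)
The plan is to split the problem into an easy upper bound coming from semicontinuity and a hard lower bound coming from a pluricanonical extension theorem, and then to use the global generation hypothesis to attack the latter.

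First I would reduce to $S=\dd$. Exactly as in Paragraph~\ref{mmp.extends.conj.thm.c.cor.pf}, the function $s\mapsto h^0(X_s,\omega_{X_s}^{[m]})$ is constructible, hence locally constant at $0\in S$ if and only if it is so along every disc $\dd\to S$ through $0$, and by Lemma~\ref{pt.fg.loc.fg.lem} a single neighbourhood then works for all $m$ simultaneously. By Lemma~\ref{defs.of.can.lem} I may shrink so that $K_{X/\dd}$ is $\q$-Cartier and all fibers are canonical, so that the reflexive powers $\omega_{X_s}^{[m]}$ are the right objects and compute the plurigenera. Using a simultaneous projective resolution together with canonicity of every fiber, standard upper semicontinuity of $h^0$ of the relative pluricanonical sheaf gives $h^0(X_s,\omega_{X_s}^{[m]})\le h^0(X_0,\omega_{X_0}^{[m]})$ for $s$ near $0$. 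The whole content is therefore the reverse inequality, namely that every pluricanonical section on $X_0$ extends to a relative section over a neighbourhood of $0$; granting this, the plurigenera are constant, so by Grauert each $g_*\omega_{X/\dd}^{[m]}$ is locally free and commutes with base change, and the relative canonical model $\proj_\dd\oplus_{m\ge 0}g_*\omega_{X/\dd}^{[mr]}$ is flat over $\dd$ with central fiber the canonical model of $X_0$.

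The role of the hypothesis is precisely to feed this extension step. Global generation of $\omega_{X_0}^{[r]}$ produces the semiample (Iitaka) fibration $\psi_0:X_0\to Z_0$ with $\omega_{X_0}^{[r]}\cong\psi_0^*A_0$ for an ample $A_0$ on $Z_0$, so the plurigenera of $X_0$ are governed by $(Z_0,A_0)$ and, geometrically, the sought invariance is the deformation invariance of $h^0(Z_s,A_s^{\otimes m})$ along a relative model of the Iitaka fibration, which Grauert would deliver once such a relative model is shown to be flat. Equally important, the system $|\omega_{X_0}^{[r]}|$ is now nonempty and basepoint free, so it has a general effective member $H_0$ available as an auxiliary divisor to manufacture positivity in an extension argument.

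The \emph{heart of the matter}, and the step I expect to be the main obstacle, is the extension of pluricanonical sections across the central fiber when $K_{X_0}$ is nef but not big. In the general-type case this is exactly what Nakayama's Theorem~\ref{nak.3.8} supplies, applied to a resolution of the relative canonical contraction, with bigness of $K$ furnishing the strict positivity that drives the surjectivity. Without bigness, both Theorem~\ref{nak.3.8} and the analytic Ohsawa--Takegoshi type theorems demand a strictly positive contribution in the directions transverse to the Iitaka fibration, which mere semiampleness does not provide. The natural attempt is to add $\frac{1}{r}H_0$ as an auxiliary boundary and run an inductive adjoint/extension tower in the style of Siu, but one must then control the resulting multiplier ideals uniformly in a family whose total space and nearby fibers need not be algebraic, precisely the behaviour forced by Examples~\ref{amp.K.quot.sing.exmp} and~\ref{bad.fams.exmp}. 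It is this simultaneous loss of algebraicity and of transverse positivity, rather than any formal obstruction, that keeps the statement open.
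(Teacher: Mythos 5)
This statement is a \emph{conjecture} in the paper: no proof is given, only the comments following it, which record that the case of terminal singularities is handled in \cite[12.5.5]{km-flips}, that the projective family case with canonical singularities is in \cite[VI.3.15--16]{nak-book}, and that the general (non-projective total space) case is open. Your proposal is therefore not being measured against a proof in the paper, and, read carefully, it is not a proof either: you reduce correctly to $S=\dd$ via constructibility and Lemma~\ref{pt.fg.loc.fg.lem}, you correctly isolate the content as the extension of pluricanonical sections from $X_0$ across the central fiber, and you correctly observe that Theorem~\ref{nak.3.8} (and its Ohsawa--Takegoshi analogues) require a strictly positive contribution transverse to the Iitaka fibration that semiampleness alone does not supply. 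But then you stop, and in your final sentence you concede that this is "precisely what keeps the statement open." That concession is accurate, and it means the proposal establishes nothing beyond the reductions; the heart of the matter is named but not crossed.

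Two smaller points. First, your claimed "easy upper bound" $h^0(X_s,\omega_{X_s}^{[m]})\le h^0(X_0,\omega_{X_0}^{[m]})$ from semicontinuity is not automatic in this setting: upper semicontinuity applies to the coherent sheaf $\omega_{X/\dd}^{[m]}$, and identifying its restriction to fibers with $\omega_{X_s}^{[m]}$ is exactly the hull/base-change issue the paper has to address in Paragraph~\ref{mmp.extends.conj.thm.c.S.pf}; it is plausible here via Lemma~\ref{defs.of.can.lem}, but it is not free. Second, the suggestion to use a general member $H_0\in|\omega_{X_0}^{[r]}|$ as an auxiliary boundary and run a Siu-type tower is the natural idea, but it is precisely where the known proofs break without projectivity or algebraicity of the nearby fibers --- the paper's Examples~\ref{amp.K.quot.sing.exmp} and~\ref{bad.fams.exmp} show how badly nearby fibers can fail to be algebraic once one leaves the canonical general-type setting. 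So the verdict is: your analysis of the difficulty is sound and agrees with the paper's own comments, but there is no proof here, nor does the paper claim one exists.
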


{\it Comments.} More generally, the same may hold if $X_0$ is  Moishezon (that is, bimeromorphic to a projective variety),  K\"ahler or  in Fujiki's class $\mathcal C$ (that is, bimeromorphic to a compact K\"ahler manifold; see \cite{ueno-83} for an introduction).

A positive answer is known in many cases. 
\cite[12.5.5]{km-flips} proves this
if $X_0$ is projective and has terminal singularities. However, 
the proof works for the Moishezon and class $\mathcal C$ cases as well.

The projective case with  canonical  singularities is discussed in  \cite[VI.3.15--16]{nak-book}; I believe that  the projectivity assumption is very much built into the proof given there; see \cite[VI.3.11]{nak-book}. 
\medskip

\begin{ack} I thank D.~Abramovich, F.~Campana, J.-P.~Demailly, O.~Fujino, A.~Landesman, S.~Mori,  T.~Murayama, V.~Tosatti, D.~Villalobos-Paz, C.~Voisin and C.~Xu
for  helpful comments and   corrections.
Partial  financial support    was provided  by  the NSF under grant number
DMS-1901855.
\end{ack}

%\bibliography{refs}

\def\cprime{$'$} \def\cprime{$'$} \def\cprime{$'$} \def\cprime{$'$}
  \def\cprime{$'$} \def\dbar{\leavevmode\hbox to 0pt{\hskip.2ex
  \accent"16\hss}d} \def\cprime{$'$} \def\cprime{$'$}
  \def\polhk#1{\setbox0=\hbox{#1}{\ooalign{\hidewidth
  \lower1.5ex\hbox{`}\hidewidth\crcr\unhbox0}}} \def\cprime{$'$}
  \def\cprime{$'$} \def\cprime{$'$} \def\cprime{$'$}
  \def\polhk#1{\setbox0=\hbox{#1}{\ooalign{\hidewidth
  \lower1.5ex\hbox{`}\hidewidth\crcr\unhbox0}}} \def\cdprime{$''$}
  \def\cprime{$'$} \def\cprime{$'$} \def\cprime{$'$} \def\cprime{$'$}
\providecommand{\bysame}{\leavevmode\hbox to3em{\hrulefill}\thinspace}
\providecommand{\MR}{\relax\ifhmode\unskip\space\fi MR }
% \MRhref is called by the amsart/book/proc definition of \MR.
\providecommand{\MRhref}[2]{%
  \href{http://www.ams.org/mathscinet-getitem?mr=#1}{#2}
}
\providecommand{\href}[2]{#2}

\bigskip

  Princeton University, Princeton NJ 08544-1000, \

\email{kollar@math.princeton.edu}

\end{document}